\newtheorem{thm}{Theorem}[section]
\newtheorem{prop}[thm]{Proposition}
\newtheorem{lem}[thm]{Lemma}
\newtheorem{conj}[thm]{Conjecture}
\theoremstyle{definition}
\newtheorem{defn}[thm]{Definition}
\newtheorem{obs}[thm]{Observation}
\newtheorem{col}[thm]{Corollary}
\newtheorem{rem}[thm]{Remark}
\newtheorem{app}[thm]{Application}
\def\p{{\mathfrak p}}
\let\c@equation\c@thm
\numberwithin{equation}{section}
\title{Packing properties of cubic squarefree monomial ideals}
\author[A. Alilooee]{Ali Alilooee}
\address{Bradley University, Department of Mathematics and Statistics,
Bradley Hall,
Peoria, IL, USA }
\email{a20480m2018@gmail.com}
\author[A. Banerjee]{Arindam Banerjee}
\address{Ramakrishna Mission Vivekenanda University, Belur, West Bengal, India}
\email{123.arindam@gmail.com}
\begin{document}

\maketitle

\begin{abstract}
The symbolic powers, in general, are not equal
to the ordinary powers. Therefore, one interesting question here is for what classes of ideals ordinary and symbolic powers coincide? The answer to this question for squarefree monomial ideals may be packing property. In this paper, we classify all cubic path ideals for those the symbolic and ordinary powers coincide. 
\end{abstract}

\section{introduction}
\hspace{.1 in}  In this article we study the symbolic powers of path ideals. 
Let $I$ be an ideal in a Noetherian domain $R$, its $n^{\text{th}}$ symbolic power is defined as follow
$$I^{(n)}=\displaystyle\bigcap_{\p\in \text{Ass}_{R}(R/I)}(I^{n}R_{\p}\cap R).$$
Symbolic powers have been a historically important topic. Krull's famous proof of principal ideal theorem uses them. The proof of the Hartshorne-Lichtenbaum vanishing theorem also uses symbolic powers. In case of the monomial ideals the study of the symbolic power is intimately connected with the so called packing problem. In the context of linear optimization, Conforti and Cornuejols made a conjecture that relates to characterization of the set of square-free monomials ideals whose symbolic and ordinary powers are equal.
A hypergraph $\mathcal{H}$ satisfies the {\bf K{\"o}nig property} if the maximum number of independent hyperedges of $\mathcal{H}$ equals the height of the edge ideal $I(\mathcal{H})$. A hypergraph $\mathcal{H}$ is said to satisfy the {\bf  packing property} if all of the minors of $\mathcal{H}$ satisfy the K{\"o}nig property.  Conforti and Cornuéjols conjectured in \cite{conforti1994} that a hypergraph satisfies the max-flow min-cut property if and only if it satisfies the packing property (packing property can be rephrased in terms of ideals for more details see \cite{Dao2018}). By using Conforti and Cornuejols conjecture and Theorem 2.6 we can make the following conjecture. 
\begin{conj}\label{conjecture1994}Let $I=I(\mathcal{H})$ be the edge ideal of a hypergraph. The following conditions are equivalent. 
\begin{enumerate}
\item  $I^{(n)}=I^{n}$ for all $n\geq 1$.
\item $\mathcal{H}$ satisfies the packing property.

\end{enumerate}

\end{conj}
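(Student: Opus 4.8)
The plan is to derive Conjecture~\ref{conjecture1994} from the theorem quoted above (\cite{Villarreal2007,Herzog2008}), reducing the algebraic statement to a purely combinatorial one. Since that theorem already gives that $I^{(n)}=I^{n}$ for all $n\geq 1$ holds if and only if $\mathcal{H}$ satisfies the max-flow min-cut (MFMC) property, condition $(1)$ of the conjecture may be replaced throughout by ``$\mathcal{H}$ is MFMC.'' It therefore suffices to prove, for an arbitrary hypergraph $\mathcal{H}$, that the MFMC property and the packing property are equivalent; in this reduced form the statement is exactly the conjecture of Conforti and Cornu\'ejols \cite{conforti1994}. I would split the proof into the two implications, one of which is a theorem and the other of which is the genuine obstacle.

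First I would establish $(1)\Rightarrow(2)$, i.e.\ MFMC $\Rightarrow$ packing, which is unconditional. Two standard facts suffice. The MFMC equality, specialized to the all-ones weight, reads that the maximum number of pairwise disjoint hyperedges equals the minimum cardinality of a vertex cover, and the latter is exactly $\operatorname{ht} I(\mathcal{H})$; thus MFMC forces the K\"onig property. Moreover the MFMC property is inherited by every minor: in the covering formulation, deletion $\mathcal{H}\setminus x$ and contraction $\mathcal{H}/x$ correspond to forcing the variable $x$ to $0$ and to $1$ respectively, and both operations preserve the integrality underlying the max-flow min-cut equality. Combining the two, every minor of an MFMC hypergraph satisfies K\"onig, which is precisely the packing property. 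This recovers the ``$I^{(n)}=I^{n}\Rightarrow$ packing'' half of the conjecture in full generality.

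The hard part will be the converse $(2)\Rightarrow(1)$, packing $\Rightarrow$ MFMC. I would argue by contradiction through a minimal counterexample: a hypergraph $\mathcal{H}$ that satisfies the packing property but fails MFMC, chosen with the fewest vertices. Since packing is inherited by minors, every proper minor of $\mathcal{H}$ again packs, and by minimality every proper minor is MFMC; hence $\mathcal{H}$ is \emph{minimally non-MFMC}. The strategy is to show no such $\mathcal{H}$ exists by confronting its structure with the packing hypothesis. Concretely I would study the set-covering polyhedron $Q(\mathcal{H})=\{x\in\mathbb{R}^{V}_{\geq 0}: Mx\geq \mathbf{1}\}$, where $M$ is the edge--vertex incidence matrix, and use the polyhedral description of minimally non-MFMC (equivalently minimally nonideal) clutters provided by Lehman's theorem, then attempt to exhibit in each such configuration a minor violating K\"onig. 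It is exactly here that the problem is open: the family of minimally non-MFMC hypergraphs is not fully classified, and one cannot at present rule out every such hypergraph that happens to pack. This single implication is the entire content of the Conforti--Cornu\'ejols conjecture.

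Because that implication remains unresolved, a complete and unconditional proof of the statement as worded is beyond the reach of current methods, and I would not claim one. What I would realistically deliver is the direction $(1)\Rightarrow(2)$ above in full, together with verifications of $(2)\Rightarrow(1)$ on structurally restricted families where Lehman's obstruction can be eliminated --- for example binary clutters, where Seymour's theorem characterizes the MFMC ones as exactly those with no $Q_6$ minor, and bounded-edge-size cases such as the cubic (3-uniform) hypergraphs that are the subject of this paper. The full equivalence as stated would then stand conditional on the Conforti--Cornu\'ejols conjecture, a dependence I would flag explicitly rather than conceal.
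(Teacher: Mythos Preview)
Your analysis is correct, and in fact there is nothing to compare against: the paper does not prove this statement. It is labeled a \emph{conjecture} precisely because, as you identified, the equivalence reduces via the theorem of \cite{Villarreal2007,Herzog2008} to the Conforti--Cornu\'ejols conjecture, and the implication packing $\Rightarrow$ MFMC remains open in general. The paper merely records the conjecture and then verifies it on restricted classes (cubic path ideals, certain $3$-partite hypergraphs) in the later sections.

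Your write-up goes somewhat further than the paper by actually supplying the unconditional direction $(1)\Rightarrow(2)$ (MFMC $\Rightarrow$ K\"onig via specialization to the all-ones weight, plus closure of MFMC under minors), which the paper leaves implicit. Your honest flagging of $(2)\Rightarrow(1)$ as the genuine open problem, and the suggestion to verify it on structured subfamilies, is exactly the stance the paper takes. So the proposal is appropriate; just be aware you are being asked to ``prove'' a conjecture, and the right deliverable is what you gave: the known direction in full, the reduction to Conforti--Cornu\'ejols, and an explicit statement that the converse is open.
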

In the graph theory it is well-known (Cf. \cite{Villareal2009}, Proposition 4.27) that a graph $G$ satisfies the packing property if and only if $G$ is bipartite. Then it is straightforward to see that Conjecture~\ref{conjecture1994} for a squarefree ideal of degree 2 is correct. Many researchers have investigated this conjecture. In this paper we will study the packing property for cubic squarefree monomial ideals. Our main contribution in this paper is to solve this conjecture for all three path ideals. Our result about 3-path ideals is the following:\\
 
\begin{thm}
Let $G$ be a connected graph and $t\geq 2$ be an integer and let $J=I_3(G)$ be the cubic path ideal of $G$. Then $J^{(n)}=J^{n}$ for all $n\geq 1$ if and only if $G$ is a path graph $P_{k}$ or $G$ is the cycle $C_{3k}$ when $k=1,2,3$.   
\end{thm}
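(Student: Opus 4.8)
The plan is to translate everything to the $3$-uniform hypergraph $\mathcal H=\mathcal H(I_3(G))$ and use the equivalence ``$J^{(n)}=J^n$ for all $n$'' $\iff$ $\mathcal H$ has the max-flow min-cut property, together with the chain of implications MFMC $\Rightarrow$ packing $\Rightarrow$ every minor of $\mathcal H$ satisfies the K\"onig property (the first implication and the minor-closedness of MFMC being standard). Thus it suffices, for the necessity, to produce one minor violating K\"onig, and for the sufficiency, to verify MFMC for the listed graphs.

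For the ``if'' direction I treat the three families separately. When $G=C_3$ the ideal $J=(x_1x_2x_3)$ is principal, so $J^{(n)}=J^n$ trivially. When $G=P_k$ the incidence matrix of $\mathcal H(I_3(P_k))$ has the consecutive-ones property (the three vertices of every $3$-path form an interval of columns), hence it is totally balanced, hence $\mathcal H(I_3(P_k))$ is Mengerian and $J^{(n)}=J^n$ for all $n$ (alternatively one gives a direct induction peeling off an end vertex of the path). The cases $G=C_6,C_9$ are finite, and I would verify the MFMC property for them directly; since these hypergraphs are $3$-uniform and $3$-partite, the theorem above characterising $J^{(2)}=J^2$ for $3$-uniform $3$-partite hypergraphs already gives $J^{(2)}=J^2$ by exhibiting, for the relevant bad subhypergraph of length $3$, a hyperedge $E$ with the complementary triple again a hyperedge of $\mathcal H$, and the higher symbolic powers are then checked separately.

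For the ``only if'' direction, assume $J^{(n)}=J^n$ for all $n$, so $\mathcal H$ satisfies the packing property. If $G$ has a vertex $v$ of degree $\ge 3$ with three neighbours $a,b,c$, delete every vertex of $G$ outside $\{v,a,b,c\}$ to pass to $\mathcal H(I_3(G[\{v,a,b,c\}]))$. If at most one of the pairs $ab,ac,bc$ is an edge, then no $3$-path of $G[\{v,a,b,c\}]$ avoids $v$, so the hyperedges are exactly $\{v,a,b\},\{v,a,c\},\{v,b,c\}$, and contracting $v$ yields the triangle clutter $\{\{a,b\},\{a,c\},\{b,c\}\}$, which fails K\"onig (matching $1$, cover $2$); if at least two of those pairs are edges, the resulting hypergraph already has maximum matching $1$ but height $2$, again failing K\"onig. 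Either way $\mathcal H$ has a minor violating K\"onig, a contradiction, so $\Delta(G)\le 2$ and, $G$ being connected, $G$ is a path or a cycle; paths being already handled, I may assume $G=C_n$.

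Finally I analyse $C_n$. If $3\nmid n$ then the maximum number of pairwise disjoint $3$-paths in $C_n$ is $\lfloor n/3\rfloor$ while the minimum vertex cover (the height of $J$) is $\lceil n/3\rceil$, so $\mathcal H$ itself violates K\"onig; hence $3\mid n$, say $n=3k$, and for $k\le 3$ we are in the allowed cases $C_3,C_6,C_9$. For $k\ge 4$ I would exhibit a minor violating K\"onig: contracting one vertex of $C_{3k}$ leaves three $2$-element edges together with a linear chain of $3k-5$ three-element edges, and contracting a suitable choice of interior vertices of this chain collapses it into a path of $2$-element edges, so that the whole minor becomes a cycle graph on an \emph{odd} number of vertices, which fails K\"onig. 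I expect the main obstacle to be exactly this last bookkeeping: arranging the contractions so that for every $k\ge 4$ the chain really closes up into an odd cycle (the naive rule of contracting every fourth vertex produces an odd cycle only for certain residues of $k\bmod 4$ and must be corrected near the seam where the chain wraps around), and, on the other side, certifying the full MFMC property — rather than merely K\"onig on all minors — for $C_6$ and $C_9$.
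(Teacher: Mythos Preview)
Your overall strategy is sound and genuinely different from the paper's: you reduce everything to the K\"onig property of suitable minors, whereas the paper works almost entirely with explicit monomials in symbolic powers and with the criterion of Remark~\ref{Craigmethod}. Your treatment of the necessity when $G$ has a vertex of degree $\ge 3$, and when $G=C_n$ with $3\nmid n$, is correct and pleasantly clean.

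There are, however, two substantive gaps, both of which you flag yourself.

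\emph{Sufficiency for $C_6$ and $C_9$.} Knowing $J^{(2)}=J^2$ is far from enough, and ``direct verification'' of MFMC is not a finite check in any naive sense (the vector $c$ in Definition~\ref{min-max} ranges over $\mathbb N^n$). Note also that $\mathcal H_3(C_6)$ is \emph{not} balanced (the odd cycle $(x_1,\{x_1x_2x_3\},x_3,\{x_3x_4x_5\},x_5,\{x_5x_6x_1\},x_1)$ has no hyperedge containing $\{x_1,x_3,x_5\}$), so you cannot appeal to Theorem~\ref{Herzog} here. The paper handles these cases by a completely different mechanism: it fixes a vertex $x$, shows that $(J:x)$ \emph{is} balanced for $k\le 3$ (Observation~\ref{niceObservation}), and that the deletion $I$ is the path ideal of a path (Corollary~\ref{Adam'thm}); then it proves the technical inclusion of Proposition~\ref{newProp} (resting on Lemma~\ref{Ali'sLemma}) so that Remark~\ref{Craigmethod} applies. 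This is the real work of the section, and your proposal does not replace it.

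\emph{Necessity for $C_{3k}$ with $k\ge 4$.} Your plan to contract down to an odd cycle is delicate; e.g.\ contracting $x_1,x_4,x_7,x_{10}$ in $C_{12}$ yields $C_8$, an even cycle. A spacing-$4$ pattern ($x_1,x_5,x_9$) does give $C_9$ for $k=4$, but turning this into a uniform argument for all $k\ge 4$ requires exactly the bookkeeping you worry about. The paper sidesteps this entirely: in Observation~\ref{niceObservation}(1) it writes down an explicit monomial
\[
f=x_2x_3x_4x_5^2x_6x_7x_8x_9^2x_{10}\cdots x_{3k-1}x_{3k}\in (J:x_1)^{(k+1)}\setminus (J:x_1)^{k+1},
\]
and then invokes the fact that contraction preserves the Mengerian property. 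This is both shorter and more robust than the odd-cycle-minor route; if you want to keep your packing viewpoint, it would be simpler to show directly that some minor of $\mathcal H_3(C_{3k})$ fails K\"onig by exhibiting such a monomial rather than by chasing contractions.
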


We organize the paper as follows.  In section (2) we review some necessary preliminaries. Then in section (3) we shall study cubic path ideals.

\section{Preliminaries} 

\begin{defn}
Let $X=\{x_1,x_2,\dots,x_n\}$ be a finite set. A {\bf simple hypergraph} on $X$ is a family $\mathcal{H}=(E_1,\dots,E_q)$ of subsets of $X$ such that 
\begin{enumerate}
\item All $E_i$s are nonempty;\\
\item $\displaystyle\cup_{i=1}^{q} E_i=X$;\\
\item None of the $E_i$s is contained within another.

\end{enumerate}
In this paper all hypergraphs are simple. The elements of $X$ are called {\bf vertices} and the sets $E_1,E_2,\dots,E_q$ are called the {\bf hyperedges} of $\mathcal{H}$. 
We sometimes denote the hypergraph $\mathcal{H}$ over the vertex set $X$ and with the edge set $\mathcal{E}=\{E_1,E_2,\dots,E_q\}$ with the pair $\mathcal{H}=(X,\mathcal{E})$.   
\end{defn}
\begin{defn}
A hypergraph $\mathcal{H}$ is called {\bf $r$-uniform} if for each $E\in \mathcal{H}$ we have $|E|=r$. It is obvious that a 2-uniform hypergraph is a graph.  
\end{defn}
\begin{defn}
Let $\mathcal{H}=(E_1,E_2,\dots,E_q)$ be a simple $r$-uniform hypergraph over the vertex set $X$ and $k$ be a field. We define the {\bf edge ideal} of $\mathcal{H}$ as a squarefree
monomial  ideal $I(\mathcal{H})$ in the polynomial ring $R=k[X]$ defined as
$$I(\mathcal{H})=(x_{i_1}\dots x_{i_r}:\{i_1\dots i_r\}\in \mathcal{H}).$$
\end{defn}

We recall the definition of Mengerian hypergraphs here. 
\begin{defn}\label{min-max}
Let $A$ be the edge-vertex incidence matrix of the hypergraph $\mathcal{H}$. Then $\mathcal{H}$ is called a {\bf Mengerian hypergraph} if for all $c\in \mathbb{N}^{n}$ we have   
 
$$\min\{c\cdot x|Ax\geq \mathbbm{1},x\in\mathbb{N}^{n}\}=\max\{y\cdot\mathbbm{1}|yA\leq c; y\in \mathbb{N}^{m}\}$$

\end{defn}
\begin{thm}[\cite{Villarreal2007}, \cite{Herzog2008}, \cite{Tai2019}, \cite{Trung2006}]\label{Tai}
Let $I$ be the edge ideal of a hypergraph $\mathcal{H}$. Then $I^{(n)}=I^{n}$ for all $n\geq 1$ if and only if $\mathcal{H}$ is Mengerian.
\end{thm}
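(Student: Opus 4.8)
The plan is to translate the statement into the Mengerian property and then argue sufficiency and necessity separately. Write $\mathcal{H}=\mathcal{H}(G)$ for the $3$-uniform hypergraph whose hyperedges are the $3$-vertex paths of $G$, so that $J=I_3(G)=I(\mathcal{H})$. By Theorem~\ref{Tai}, the condition $J^{(n)}=J^{n}$ for all $n\geq 1$ is equivalent to $\mathcal{H}$ being Mengerian, so the theorem becomes the classification: for connected $G$, the hypergraph $\mathcal{H}(G)$ is Mengerian exactly when $G=P_{k}$ or $G\in\{C_{3},C_{6},C_{9}\}$. I will use two standard facts about Definition~\ref{min-max}: the Mengerian property is inherited by every minor, and taking $c=\mathbbm{1}$ shows a Mengerian hypergraph satisfies the K\"onig property (there the minimum is the height of $I(\mathcal{H})$, i.e.\ the minimum size of a vertex cover, and the maximum is the size of a largest family of pairwise disjoint hyperedges). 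Hence to prove non-Mengerianness it suffices to exhibit one minor violating K\"onig.

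For sufficiency I treat the families separately. If $G=P_{k}$ with vertices $1,\dots,k$, the hyperedges are $\{i,i+1,i+2\}$ and $\mathcal{H}(G)$ is a simplicial tree: $\{1,2,3\}$ is a leaf with witness $\{2,3,4\}$, and deleting it leaves the $3$-path hypergraph of $P_{k-1}$, so a leaf order follows by induction; then ``simplicial tree $\Rightarrow$ Mengerian'' finishes this case. For $G=C_{3}$ the ideal $J=(x_{1}x_{2}x_{3})$ is principal, so the equality is trivial. The genuinely delicate cases are $C_{6}$ and $C_{9}$. Neither is a simplicial tree (by cyclic symmetry no hyperedge is a leaf), and $C_{6}$ is not balanced, since $(1,\{1,2,3\},3,\{3,4,5\},5,\{5,6,1\},1)$ is an odd cycle whose vertex set $\{1,3,5\}$ lies in no hyperedge; this equilateral obstruction uses $6=3\cdot 2$ and has no analogue in $C_{9}$, where any three pairwise-close vertices are forced to be consecutive. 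I would therefore first test whether $\mathcal{H}(C_{9})$ is balanced and, if so, conclude by Theorem~\ref{Herzog}; for $C_{6}$, and for $C_{9}$ if balancedness fails on longer odd cycles, I would verify the Mengerian min--max equality directly. Concretely I list the minimal primes of $J$ (for $C_{6}$ the antipodal pairs $(x_{i},x_{i+3})$ together with the alternating triples $(x_{1},x_{3},x_{5})$ and $(x_{2},x_{4},x_{6})$, and the analogous finite list for $C_{9}$), use the formula $J^{(n)}=\p_{1}^{n}\cap\cdots\cap\p_{r}^{n}$ for squarefree monomial ideals, and check $J^{(n)}\subseteq J^{n}$ on monomials; by cyclic symmetry this reduces to a finite computation.

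For necessity, assume $G$ is connected and is neither a path nor one of $C_{3},C_{6},C_{9}$. A connected graph with all degrees at most $2$ is a path or a cycle, so either $G$ has a vertex of degree $\geq 3$, or $G=C_{n}$ with $n\notin\{3,6,9\}$. If $v$ has degree $\geq 3$ with three neighbours $a,b,c$, then $\{a,v,b\},\{a,v,c\},\{b,v,c\}$ are hyperedges; contracting $v$ replaces them by the two-element edges $\{a,b\},\{a,c\},\{b,c\}$ (any triple $\{a,b,c\}$ that may occur is non-minimal and is discarded), and deleting every vertex other than $a,b,c$ leaves exactly the triangle $K_{3}$, whose maximum matching $1$ is strictly below its minimum cover $2$; thus K\"onig fails and $\mathcal{H}(G)$ is not Mengerian. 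If $G=C_{n}$ with $3\nmid n$, the full hypergraph already fails K\"onig, since a smallest set of vertices meeting every three consecutive vertices has size $\lceil n/3\rceil$ while a largest family of pairwise disjoint consecutive triples has size $\lfloor n/3\rfloor$.

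The hardest case, and the main obstacle, is $G=C_{n}$ with $n=3k$ and $k\geq 4$ (the cycles $C_{12},C_{15},\dots$), where the full hypergraph is K\"onig and a genuine minor must be produced. The difficulty is structural: deleting any single vertex of a cycle only breaks it into a path, which always satisfies K\"onig, so contractions are unavoidable, and these turn $\mathcal{H}(C_{n})$ into a non-uniform hypergraph in which one must locate a K\"onig-violating configuration---for instance a short odd cyclic pattern created by the two collapsed two-element edges. I expect the crux of the whole argument to be pinning down precisely such a minor for every $k\geq 4$ while confirming that none exists for $k\leq 3$, so as to produce the sharp cutoff at $C_{9}$. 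A possibly cleaner alternative would be to exhibit directly a monomial in $J^{(m)}\setminus J^{m}$ for a suitable $m$ and invoke Theorem~\ref{Tai} to deduce non-Mengerianness.
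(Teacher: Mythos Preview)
Your proposal does not address the stated theorem. Theorem~\ref{Tai} is the general characterization---for an \emph{arbitrary} hypergraph $\mathcal{H}$, the edge ideal satisfies $I^{(n)}=I^{n}$ for all $n$ if and only if $\mathcal{H}$ is Mengerian---and the paper does not prove it: it is quoted from the cited references as a preliminary tool. What you have written is an attempt at Theorem~\ref{maintheorem}, the classification of connected graphs $G$ for which the cubic path ideal $I_3(G)$ has $J^{(n)}=J^{n}$; indeed you invoke Theorem~\ref{Tai} in your first paragraph as a black box, which confirms you are not trying to prove it.

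If your intended target was Theorem~\ref{maintheorem}: the paper's sufficiency for $P_k$ matches your simplicial-tree idea (Corollary~\ref{Adam'thm}), but for $C_6$ and $C_9$ the paper does not use balancedness or a direct primary-decomposition check; it uses the colon-ideal machinery of Remark~\ref{Craigmethod} together with Observation~\ref{niceObservation} and Proposition~\ref{newProp}, assembled as Corollary~\ref{newCol}. Your balancedness route fails for $C_6$, as you acknowledge, and you leave $C_9$ undecided, so sufficiency is incomplete precisely where work is required. For necessity the paper never reaches for K\"onig or minors: it exhibits explicit monomials lying in a symbolic power but not in the corresponding ordinary power (Theorem~\ref{theorem:meandArindam} handles degree-$3$ vertices and $C_{3k\pm1}$; Observation~\ref{niceObservation} handles $C_{3k}$ with $k>3$ by producing a monomial in $(J:x)^{(k+1)}\setminus(J:x)^{k+1}$ and using that colon ideals are minors). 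Your treatment of $C_{3k}$ with $k\geq4$ is, by your own account, only a sketch, so there is a genuine gap there as well.
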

\begin{prop}[\cite{Berge1989}, page 199, Proposition 1 and 2]
Let $I$ be a squarefree monomial ideal and $x$ be a variable and assume $I^{(n)}=I^{n}$ for all $n\geq 1$, then $(I:x)^{(n)}=(I:x)^{n}$.
 
\end{prop}
For sqaurefree monomial ideals we have a very useful following proposition. We will use this fact in Chapter 3 to show some of our results 
\begin{prop}[\cite{Villarreal2015}, Proposition 4.3.25]~\label{easyfact}
Let $R$ be a polynomial ring over a field $k$ and $I$ be a squarefee monomial ideal. If $\p_1,\p_2,\dots,\p_r$ are all the minimal primes of $I$, then for all $n\geq 1$ we have 
$$I^{(n)}=\p_1^{n}\cap\p_2^{n}\cap\dots\cap\p_r^{n}.$$  
\end{prop}
We also recall the following remark from \cite{Dao2018}. We will use this remark to prove one of our main result in Chapter 3. 
\begin{rem}[\cite{Dao2018}, Remark 4.12]\label{Craigmethod}
Let $J$ be a squarefree monomial ideal and let $x$ be a variable. We let $I_{x}=(J:x)$ and $I$ be an ideal generated by all monomials in $J$ which do not involve $x$. Put $I_x=I+L$ where $L$ is an ideal generated by all monomials in $I_x$ which are not in  $I$. Then if we assume that $I^{(n)}=I^{n}$ and 
$I_{x}^{(n)}=I_{x}^{n}$ for all $n\geq 1$, then we have $J^{(n)}=J^n$ for all $n\geq 1$ if and only if we have 
$$\displaystyle I^k\cap I^{i}L^{n-i}\subset \sum_{j=k}^{n} I^{j}L^{n-j}=I^{k}I_{x}^{n-k}$$
for all $k$ and $i$ in which we have $0\leq i<k\leq n$.  
\end{rem}

\section{Path ideals}

Path ideals first were introduced by Conca and De Negri in \cite{conca1999}. In this section we will study symbolic powers of a cubic path ideal of a graph. First we will recall the definition of the path ideal. 
\begin{defn}
Let $G=(V,E)$ be a simple graph. A sequence of distinct vertices $v_1,v_2,\dots,v_n$ is called a path if $\{v_i,v_{i+1}\}$ is an edge in $G$ for each $1\leq i<n$. The number of edges in a path is defined as the length of that path. We define the {\bf path ideal} of $G$, denoted by $I_t(G)$, to be the
ideal of $k[V]$ generated by the monomials of the form
$x_{i_1}x_{i_2}\dots x_{i_t}$ where $x_{i_1},x_{i_2},\dots,x_{i_t}$ is
a path in $G$. 
\end{defn}
\begin{defn}
Let $G=(V,E)$ be a simple graph and $I_t(G)$ be a path ideal of length $t-1$ when $2\leq t$. Then we define the {\bf path hypergraph} as a hypergraph whose vertices are $V$ and whose edges consist of all paths of the length $t-1$. This hypergraph is denoted by $\mathcal{H}_t(G)$.
\end{defn}
From Theorem~\ref{Tai}, [\cite{Adam2010}, Theorem 2.7] and [\cite{Herzog2008}, Theorem 3.2] we have the following result. 
\begin{col}\label{Adam'thm}
Let $t\geq 2$ be an integer and $G$ be a rooted tree (i.e. A rooted tree is a tree with one vertex chosen as a root). The path hypergraph  $\mathcal{H}_t(G)$ is a simplicial tree (for details about simplicial trees see \cite{Faridi2011}). Moreover for the path ideal $J=I_t(G)$ we have $J^{(n)}=J^{n}$ for all $n\geq 1$.  
\end{col}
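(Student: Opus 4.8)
The plan is to chain together the three ingredients already available in the excerpt, so the work is entirely a matter of assembling citations. First I would record the identification $J = I_t(G) = I(\mathcal{H}_t(G))$: by the definition of the path ideal, the minimal monomial generators of $I_t(G)$ are the products $x_{i_1}\cdots x_{i_t}$ running over the vertex sequences of paths with $t$ vertices in $G$, and these vertex sets are exactly the hyperedges of the path hypergraph $\mathcal{H}_t(G)$; hence the two ideals literally coincide. (If $G$ has fewer than $t$ vertices then $\mathcal{H}_t(G)$ has no hyperedges and $J = 0$, in which case $J^{(n)} = 0 = J^n$ and there is nothing to prove, so I may assume $\mathcal{H}_t(G)$ is a genuine hypergraph.)

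Next I would invoke the theorem of He and Van Tuyl (\cite{Adam2010}, Theorem 2.7) quoted immediately above: since $G$ is a rooted tree, $\mathcal{H}_t(G)$ is a simplicial tree. This is precisely the first assertion of the corollary and is simply a restatement of that theorem. Now apply \cite{Herzog2008}, Theorem 3.2, that every simplicial tree is Mengerian, to conclude that $\mathcal{H}_t(G)$ is a Mengerian hypergraph in the sense of Definition~\ref{min-max}. Finally, feeding this into Theorem~\ref{Tai} — which states that for the edge ideal $I$ of a hypergraph $\mathcal{H}$ one has $I^{(n)} = I^n$ for all $n \geq 1$ if and only if $\mathcal{H}$ is Mengerian — applied to $I = J = I(\mathcal{H}_t(G))$, yields $J^{(n)} = J^n$ for all $n \geq 1$, as claimed.

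There is no real obstacle here; the statement is a pure concatenation of cited results, and the only content that warrants a sentence of care is the identification $I_t(G) = I(\mathcal{H}_t(G))$, which is immediate from the definitions. A pedantic reader might also want reassurance that "simplicial tree" is being applied to a connected object and that the notion of Mengerian used in \cite{Herzog2008} agrees with Definition~\ref{min-max}; both are fine, since a simplicial tree is connected by definition and the two formulations of the Mengerian property coincide. I would therefore keep the proof to two or three lines, essentially: $\mathcal{H}_t(G)$ is a simplicial tree by \cite{Adam2010}, hence Mengerian by \cite{Herzog2008}, hence $J^{(n)} = J^n$ for all $n$ by Theorem~\ref{Tai}.
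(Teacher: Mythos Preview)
Your proposal is correct and matches the paper's own approach: the corollary is stated there without proof, as it follows immediately from the preceding theorem of He and Van Tuyl together with the cited results that simplicial trees are Mengerian and that Mengerian is equivalent to $J^{(n)}=J^n$. Your write-up is exactly the intended concatenation, and the extra remarks on the identification $I_t(G)=I(\mathcal{H}_t(G))$ and the degenerate case are harmless clarifications.
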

Now we are ready to show when the $t$-path hypergraph of a cycle is $t$-partite.
Now we are ready to turn our attention to the class of cubic path ideals of a graph. To prove one of our main results on this paper (i.e. Theorem~\ref{maintheorem}) we need to prove the following theorem. 
\begin{thm}\label{theorem:meandArindam}
Let $G$ be a simple graph over $V=\{x_1,x_2,\dots,x_n\}$ and $\mathcal{H}_3(G)$ be a path hypergraph of $G$ of degree 3 and let $I=I_3(G)$ be the cubic path ideal of $G$.
Then we have the following 
\begin{enumerate}
\item If there is a vertex of degree 3 in $G$, then $I^{(2)}\neq I^2$.
\item If there is a copy of $C_{3k+1}$ or $C_{3k+2}$ as a subgraph in $G$ for a natural $k$, then 
$$I^{(k+1)}\neq I^{k+1}.$$ 
\end{enumerate}  
\end{thm}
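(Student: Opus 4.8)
The plan is to prove both parts by producing, in each case, an explicit monomial that lies in the symbolic power in question but fails to lie in the corresponding ordinary power, the obstruction to membership in the ordinary power being just a degree count: every monomial of $I^{m}$ has degree at least $3m$, since $I$ is generated in degree $3$ (any monomial in $I^{m}$ is a multiple of a product of $m$ cubic generators). Membership in the symbolic power will be verified with Lemma~\ref{sulivan}(1).

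For part (1), let $v$ have three distinct neighbours $a,b,c$, so that $avb,\ avc,\ bvc\in I$. I would take the degree-$5$ monomial $M=abcv^{2}$ and check $M\in I^{(2)}$ using Lemma~\ref{sulivan}(1): it suffices that $M\in I$ and that $M/x\in I$ for every variable $x\mid M$, and indeed $avb\mid M$, $bvc\mid M/a$, $avc\mid M/b$, $avb\mid M/c$, and $avb\mid M/v$. Since $\deg M=5<6$, we get $M\notin I^{2}$, hence $I^{(2)}\ne I^{2}$.

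For part (2), fix $k\ge 1$ and suppose $x_{1},\dots,x_{n}$ (subscripts modulo $n$) span a copy of $C_{n}$ in $G$ with $n=3k+1$ or $n=3k+2$; then every consecutive triple $x_{i}x_{i+1}x_{i+2}$ is a cubic generator of $I$. I would take the squarefree monomial $M=x_{1}x_{2}\cdots x_{n}$ and show $M\in I^{(k+1)}$ via Lemma~\ref{sulivan}(1): for each $S\subseteq\{1,\dots,n\}$ with $|S|\le k$ one needs $\prod_{i\notin S}x_{i}\in I$, i.e.\ the complement of $S$ in the $n$-cycle must contain three cyclically consecutive indices. Deleting the vertices of $S$ leaves at most $|S|$ arcs covering $\ge n-|S|$ vertices, so if every arc had at most $2$ vertices then $n-|S|\le 2|S|$, forcing $n\le 3|S|\le 3k$, contrary to $n\ge 3k+1$ (the case $S=\varnothing$ being trivial since $n\ge 4$); hence some arc has $\ge 3$ vertices and supplies the needed triple. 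Thus $M\in I^{(k+1)}$, while $\deg M=n\le 3k+2<3(k+1)$ gives $M\notin I^{k+1}$, so $I^{(k+1)}\ne I^{k+1}$.

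I do not expect a serious obstacle here: the whole argument rests on Lemma~\ref{sulivan} together with the degree bound for $I^{m}$. The one step that needs a little care is the pigeonhole count in part (2) — correctly bounding the number of arcs that remain after removing a set of at most $k$ vertices from the $n$-cycle (accounting for the degenerate situations where $S$ contains adjacent vertices, or is empty), so that the inequality $n\le 3|S|$ is genuinely forced and three consecutive surviving vertices are located.
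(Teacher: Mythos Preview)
Your proof is correct and uses the same witness monomials as the paper: $v^{2}abc$ for part (1) and $x_{1}\cdots x_{n}$ for part (2), with the identical degree obstruction $\deg M<3m$ ruling out membership in $I^{m}$.

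The only difference is in how membership in the symbolic power is verified. The paper argues via minimal vertex covers: for part (1) it observes directly that any vertex cover of $\mathcal{H}_{3}(G)$ must contain $x_{1}$ or two of $x_{2},x_{3},x_{4}$, and for part (2) it \emph{cites} the height computation $\operatorname{ht}(I_{3}(C_{3k+1}))=\operatorname{ht}(I_{3}(C_{3k+2}))=k+1$ from \cite{Kubitzke2014}. You instead invoke Lemma~\ref{sulivan}(1) and, for part (2), supply a self-contained pigeonhole argument (deleting $|S|\le k$ vertices from $C_{n}$ leaves at most $|S|$ arcs totalling $n-|S|>2|S|$ vertices, so one arc has length $\ge 3$). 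These two verifications are dual: your pigeonhole is precisely a direct proof that no set of size $\le k$ is a vertex cover of $\mathcal{H}_{3}(C_{n})$, i.e.\ of the height bound the paper imports. Your route is thus slightly more self-contained, while the paper's is shorter once the citation is granted.
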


\begin{proof}
We first assume there is a vertex, say $x_1$, of degree 3 in $G$. Then there are
vertices $x_2, x_3$ and $x_4$ in $G$ such that the edges $\{x_1, x_2\}, \{x_1, x_3\}, \{x_1, x_4\}$ are
in $G$. Then we have
$$x_1x_2x_3, x_1x_3x_4, x_1x_2x_4\in I.$$
We claim $x_1^2x_2x_3x_4\in I^{(2)}\backslash I^2$. To see this note that $\text{deg}(x_1^2x_2x_3x_4) = 5$ then it is obvious
that $x_1^2x_2x_3x_4\notin I^2$. The reason that $x_1^2x_2x_3x_4\in I^{(2)}$ comes from that fact that
$K =\left\langle  x_1x_2x_3, x_1x_3x_4, x_1x_2x_4\right\rangle$ is a subhypergraph of $\mathcal{H}_3(G)$ and each minimal
vertex cover of $\mathcal{H}_3(G)$ must contain two vertices of $\{x_2 x_3, x_4\}$ or just $x_1$. Since each associated prime of the ideal $I$ is given by a vertex cover of $\mathcal{H}$ we have $x_1^2x_2x_3x_4$ belongs to the second power of the all associated primes of $I$. Then from Proposition~\ref{easyfact} we can conclude that our claim is true.  

Now we assume there is a copy of subgraphs $C_{3k+1}$ or $C_{3k+2}$ in $G$. Without loss of
generality we can assume the vertex set of $C_{3k+1}$ or $C_{3k+2}$ is $\{x_1,\dots, x_{3k+1}\}$ or
$\{x_1,\dots, x_{3k+2}\}$. We have
\begin{align*}
&x_1x_2x_3, x_2x_3x_4,\dots, x_{3k-1}x_{3k}x_{3k+1}, x_1x_{3k}x_{3k+1}, x_1x_2x_{3k+1}\in I&          \\
 &\text{or}& \\
&x_1x_2x_3, x_2x_3x_4,\dots, x_{3k}x_{3k+1}x_{3k+2}, x_1x_{3k+1}x_{3k+2}, x_1x_2x_{3k+2}\in I.          & 
\end{align*}
First note that from Proposition 4.14 in [13] we have
$$\text{ht}(I_3(C_{3k+1})) = \text{ht}(I_3(C_{3k+2})) = k + 1.$$
Since $G$ has a copy of $C_{3k+1}$ or $C_{3k+2}$ and then $\mathcal{H}_3(C_{3k+1})$ or $\mathcal{H}_3(C_{3k+2})$
is a subhypergraph of $\mathcal{H}_3(G)$ and therefore, every minimal vertex cover of $\mathcal{H}_3(G)$
must contain at least $k + 1$ of the vertices of $C_{3k+1}$ or $C_{3k+2}$. Then from Proposition~\ref{easyfact} we have
\begin{align*}
f=x_1x_2x_3x_4x_5\dots x_{3k}x_{3k+1}\in I^{(k+1)}&        
 \text{or}& g=x_1x_2x_3x_4x_5\dots x_{3k+1}x_{3k+2}\in I^{(k+1)}.  
\end{align*}

On the other hand, since we have $\text{deg}(f)=3k+1$ and $\text{deg}(g)=3k+2$ clearly we can conclude that   
\begin{align*}
&f,g\notin I^{k+1}.
\end{align*}

\end{proof}
Now we are ready to prove one of our main results, Theorem~\ref{maintheorem}. We will use the similar technique as in \cite{Dao2018}. First we need the following lemma for graphs without any vertex of degree $>2$ and with no cycle $C_{3k\pm 1}$ for some integer $k$. Note that this class is equivalent to  graphs for those each connected components are either $C_{3k}$ or a path.  This result is a counterpart of Lemma 4.15 in \cite{Dao2018} for the case of the edge ideal of a graph.  
\begin{lem}\label{Ali'sLemma}
Let $G$ be a simple graph  such that all vertices have degree at most two and suppose $G$ has no cycle of 
 length $3k\pm 1$ for some $k$. Also suppose 
$p_1,\dots,p_{e+1}$ are paths of length two such that 
$\displaystyle\prod_{i=1}^{e+1} p_i$ divides $\displaystyle\left(\prod_{i=1}^{e}p'_i\right)\left(\prod_{z\in D}z\right)$ where the $p'_i$s are paths 
of length two and $D$ is a set of vertices containing no edges. 
Then there are $z_1,z_2\in D$ such that there is a path of length two in $G$ which connects $z_1$ and $z_2$.    
\end{lem}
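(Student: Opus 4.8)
The plan is to argue combinatorially by tracking, for each vertex, how often it is "used up" on the two sides of the divisibility relation. Write $M = \prod_{i=1}^{e+1} p_i$ and $N = \left(\prod_{i=1}^{e} p'_i\right)\left(\prod_{z\in D} z\right)$, so that $M \mid N$ as monomials. Each $p_i$ and each $p'_i$ is a path $abc$ on three consecutive vertices, contributing the squarefree monomial of its three vertices. The first observation is a parity/counting mismatch: the left side $M$ is a product of $e+1$ cubic path-monomials, hence has total degree $3(e+1)$, while the part of $N$ coming from the $p'_i$'s has total degree $3e$; so the $D$-part must absorb at least $3$ units of degree, and more precisely, comparing exponent vectors, $\sum_{x} \big(\operatorname{exp}_x(M) - \operatorname{exp}_x(N')\big)^+ \le |D|$ where $N' = \prod p'_i$. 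The real content is to upgrade this counting into the existence of a long path inside $D$.

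The key step is to build an auxiliary multigraph (or to chase an Eulerian-type argument) on the vertices appearing in the $p_i$'s and $p'_i$'s. First I would dispose of the trivial case: if some $p_i$ uses only vertices of $D$ (equivalently, $p_i$ is not "covered" by the $p'_j$'s at all), then its three vertices lie in $D$ and form a path of length $2$, contradicting the hypothesis that $D$ carries no path of length $2$ — so every $p_i$ must genuinely overlap the $p'_j$ system, and in fact the $D$-vertices occurring in the $p_i$'s are exactly the deficiency recorded above. Next, orient/pair up the edges: since $G$ has no vertex of degree $3$, the local structure at each vertex is severely constrained — every vertex lies on at most two edges of $G$, so the union of all the paths $p_i, p'_j$ traces out a disjoint union of paths and cycles in $G$. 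On such a path-or-cycle skeleton, the requirement that $\prod p_i$ divides $\prod p'_j$ times the $D$-monomial forces the "excess" of the $p_i$-tiling over the $p'_j$-tiling to be concentrated at endpoints, and I would show these endpoints are joined through $D$ by a subpath whose length is $\equiv 1$ or $2 \pmod 3$. Here is where the cycle hypothesis enters: if that connecting subpath had length $\equiv 0 \pmod 3$, one could retile it by cubic paths and conclude $M \mid \prod p'_j \cdot(\text{shorter }D\text{-monomial})$, eventually contradicting either minimality of $D$ or the no-$P_2$-in-$D$ condition; and the absence of cycles $C_{3i\pm1}$ in $G$ prevents the connecting path from closing up into a short cycle, so its two ends $z_1, z_\ell$ are genuinely distinct vertices of $D$ joined by a path of length $3i+1$ or $3i+2$.

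The main obstacle I anticipate is making the "excess is concentrated at endpoints, and the connecting segment has length $\not\equiv 0 \pmod 3$" step fully rigorous: on a cycle component of the skeleton one has to rule out that the deficiency wraps all the way around, and one has to handle the bookkeeping when several of the $p_i$'s and $p'_j$'s overlap on the same stretch of a path component. I would organize this by reducing to a single connected component of the skeleton (divisibility and the hypotheses all localize), then on a path component run an explicit left-to-right scan comparing the partial tilings, and on a cycle component use that $G$ has no cycle of length $3i\pm1$ to force the component's length to be a multiple of $3$, which makes the two tilings compatible there and pushes the whole deficiency onto a genuine path component. Once the deficiency is localized to one path component, identifying $z_1$ and $z_\ell$ and computing the length of the segment between them modulo $3$ is a direct parity count of the form $3(e'+1) - 3e' = 3$ distributed over the component.
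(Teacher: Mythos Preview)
Your plan is genuinely different from the paper's proof, which proceeds by induction on $e$. In the paper the base case $e=1$ is handled by hand, and the inductive step starts by locating a $p_i$ carrying two elements of $D$ and another carrying one; one then writes $p_1=z_1z_2a$, $p_2=z_3bc$, finds $p'_1,p'_2$ (and possibly $p'_3$) through the non-$D$ vertices $a,b,c$, and does a three-case analysis (depending on whether $c\in D$, whether $bc$ is an edge, etc.) that either produces the desired path directly or reduces to a smaller instance with a modified set $D'$. The degree constraint ``max degree $\le 2$'' and the forbidden cycle lengths are used only locally to rule out unwanted coincidences in each case.

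Your global tiling/deficiency picture is an attractive alternative, but as written it has a real gap. The sentence ``on a cycle component use that $G$ has no cycle of length $3i\pm1$ to force the component's length to be a multiple of $3$, which makes the two tilings compatible there and pushes the whole deficiency onto a genuine path component'' is not correct: a cycle $C_{3k}$ can carry all of the deficiency. For instance on $C_9$ take $p_1=x_1x_2x_3$, $p_2=x_4x_5x_6$ and $p'_1=x_3x_4x_5$; then $D\supseteq\{x_1,x_2,x_6\}$ lives entirely on the cycle and there is no path component at all. The lemma still holds in this example (there is a length-$4$ path from $x_2$ to $x_6$), but not for the reason you give, and in general on $C_{3k}$ you must explain why the $D$-vertices cannot all sit at pairwise distances $\equiv 0\pmod 3$. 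That requires real work; merely knowing $3\mid 3k$ does nothing.

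The second gap is the retiling step: ``if that connecting subpath had length $\equiv 0 \pmod 3$, one could retile it by cubic paths and conclude $M\mid \prod p'_j\cdot(\text{shorter }D\text{-monomial})$''. You never set up a minimality hypothesis on $D$ (or on the configuration), so there is nothing for this to contradict; and even with such a hypothesis the retiling changes the $p'_j$, which the statement does not allow. Finally, the ``left-to-right scan'' on a path component is only named, not carried out; the inequalities $a_{k-2}+a_{k-1}+a_k\le b_{k-2}+b_{k-1}+b_k+d_k$ with $\sum a_i=e+1>\sum b_i=e$ do not by themselves single out two $D$-vertices at distance $\not\equiv 0\pmod 3$, so you still owe the actual argument there. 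If you want to pursue this route, you will need to treat cycle components on their own (not dismiss them) and make the scan precise; otherwise the paper's induction-plus-case-analysis gets you there with less risk.
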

\begin{proof}
We show that we must have two paths $p_1$ and $p_2$ such that at least one of them contains two elements of $D$. The reason for this claim is that the degree of the product of the $p_i$s is $3e+3$ and the degree of the product of the $p'_i$s is $3e$ then since there is no edge on the elements of $D$, we must have at least two paths say $p_1$ and $p_2$ such that $p_1$ contains two elements of $D$ and $p_2$ contains one vertex of $D$.

Note that it is not possible to have three paths of the $p_i$s such that each has just one vertex in $D$. Because if we assume there are three paths of the $p_i$s say $p_1,p_2,p_3$ such that there are $z_i\in D$ for $i=1,2,3$ in which $p_i$ contains $z_i$ and the other vertices of $p_i$ are not in $D$, then the degree of the product of the rest is $3e-6$. Therefore the product of the paths $p_4,p_5,\dots,p_{e+1}$ can be covered by the product of $e-2$ paths of the $p'_i$s. We have two more paths $p_1',p_2'$. Now if we assume that the remaining vertices of $p_1p_2p_3$ which are not in $D$ all are covered by the product of $p_1'p_2'$, then we must have a cycle of length 7 in $G$ or we must have a vertex of degree $>$ 2 in $G$ and both are contradictions. Then one of $p_1,p_2$ or $p_3$ must contain two vertices of $D$. 

We write $p_1=z_1z_2a$ and $p_2=z_3bc$ where $z_1,z_2,z_3\in D$. Note that since there is no edge on the vertices in $D$ we can say $a\notin D$ and since $z_1,z_2\in D$ we can conclude that $\{z_1,z_2\}$ is not an edge in $G$. Therefore we can conclude that $z_1$ is connected to $z_2$ by $p_1$ which is a path of length two.

\end{proof}
 In order to prove our main theorem of this chapter we need the following observation. 
\begin{obs}\label{niceObservation}
Let $G$ be the cycle $C_{3k}$ for a positive integer $k$ and let $J=I_3(C_{3k})$ be the cubic path ideal on $C_{3k}$. If $x$ is a variable of $C_{3k}$, we have the following:
\begin{enumerate}
\item If $k>3$, then $(J:x)^{(k+1)}\neq (J:x)^{k+1}$. Moreover $\mathcal{H}_{3}(C_{3k})$ is not Mengerian. 
\item If $k\leq 3$, then $(J:x)^{(n)}=(J:x)^n$ for all $n\geq 1$.
\end{enumerate}
\begin{proof}
Without loss of generality we can assume if we denote the vertex set of $G$ by $V=\{x_1,\dots,x_{3k}\}$, then $x=x_1$. 
Thus we have 
$$(J:x_1)=(I_3(P),x_3x_2,x_2x_{3k},x_{3k}x_{3k-1})$$
where $P$ is a path on the set $\{x_3,x_4,\dots,x_{3k-1}\}$. 

Since $(J:x_1)$ is a squarefree monomial ideal we can consider a hypergraph $\mathcal{H}$ such that the edge ideal of $\mathcal{H}$ is  $(J:x_1)$. Note that if $k=1,2,3$ it is straightforward to see that $\mathcal{H}$ is balanced (for details about balanced hypergraphs see \cite{Berge1989}) and then from Theorem 2.5 in \cite{Herzog2008} we have $\mathcal{H}$ is Mengerian and by using Theorem~\ref{Tai} we can conclude that the claim is true. 

Then we suppose $k>3$ and we set $f=x_2x_3x_4x_5^2x_6x_7x_8x_9^2x_{10}\dots x_{3k-1}x_{3k}$. Note that from Proposition 4.8 in \cite{Kubitzke2014} we can conclude that $\text{ht}(I_3(P))=k-1$. Therefore, we can say there is a minimal vertex cover (say $C$) of the size $k-1$ in $\mathcal{H}_3(P)$. If $x_3\in C$ or $x_{3k-1}\in C$, then $C\cup\{x_2\}$ or $C\cup\{x_{3k}\}$ is a minimal vertex cover of $\mathcal{H}$ and otherwise $C\cup C'$ is a minimal vertex cover of $\mathcal{H}$ where $C'$ is a minimal vertex cover of the path $x_3x_2,x_2x_{3k},x_{3k}x_{3k-1}$. It is obvious that $|C'|=2$ and so we can conclude that the minimum size of minimal vertex covers in $\mathcal{H}$ is $k$.     
Let $C_0$ be a vertex cover of $\mathcal{H}$, then we have 
\begin{align}\label{formula}
|C_0|=k \Leftrightarrow \{x_2,x_5,x_8,x_{11},\dots,x_{3k-1}\}\subset C_0 &\text{\hspace{.02 in} or} &\{x_3,x_6,x_9,\dots, x_{3k}\}\subset C_0. 
\end{align}
Then we can write $f\in I^{(k+1)}$ where $I=(J:x_1)$. To see this note that each vertex cover has at least $k$ elements. If we choose a vertex cover $C_0$ of length $>k$, it is clear that $f\in {I(C_0)}^{k+1}$ where $I(C_0)$ is the prime ideal associated to $C_0$. On the other hand, from (\ref{formula}) we have all vertex cover of $\mathcal{H}$ of length $k$ has $x_5$ or $x_9$ and then it is clear that 
$f\in I(C_0)^{k+1}$. 

Now note that $f\notin I^{k+1}$. We will show this by the way of contradiction. If we assume $f\in I^{k+1}$ since we have $deg(f)=3k+1$ and  since powers on $x_2,x_3,x_{3k-1}$ and $x_{3k}$ are one then $f$ must be divided by $(e_1e_2)g$ where 
$$e_1,e_2\in\{x_2x_3,x_2x_{3k},x_{3k-1}x_{3k}\}$$ 
and $g\in I^{k-1}_3(Q)$ and $Q$ is a path on the vertices $\{x_4,x_5,\dots,x_{3k-2}\}$. On the other hand since $I^{k-1}_3(Q)\subset I^{(k-1)}_3(Q)$ we  can conclude that $g\in I^{(k-1)}_3(Q)$. Note that since powers on $x_2,x_3,x_{3k-1}$ and $x_{3k}$ are one then $e_1,e_2\in\{x_2x_3,x_{3k-1}x_{3k}\}$. Then we have 
$$f/e_1e_2=x_4x_5^2x_6x_7x_8x_9^2\dots x_{3k-2}\in I^{(k-1)}_3(Q)$$
and it is a contradiction because $C_{0}'=\{x_6,x_8,x_{11},\dots,x_{3k-4}\}$ is a vertex cover of the length $k-2$ and 
$f/e_1e_2\notin (x_6,x_8,x_{11},\dots,x_{3k-4})^{k-1}$.

\end{proof}  
\end{obs}

To prove the main theorem of this chapter we need the following proposition. We let $J=I_{3}(G)$ be the cubic path ideal of the graph $G$ and $I$ be an ideal generated by all monomials in $J$ which do not involve $x$ (when $x$ is a vertex of $G$). Write $(J:x)=I+L$ when $L$ is an ideal generated by monomials in $(J : x)$ which are not in $I$. Then we have the following result.  
\begin{prop}\label{newProp}
Let $G$ be a graph such that all vertices have degree at most two and assume $G$ has no cycle of length $3m\pm 1$ for some  
$m$, then we have 
$$\displaystyle I^k\cap I^{i}L^{n-i}\subset \sum_{j=k}^{n} I^{j}L^{n-j}$$
for all integers $k$ and $n$ in which we have $i<k\leq n$.

\end{prop}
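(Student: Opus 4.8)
The plan is to deduce the containment from a single ``exchange'' inclusion by induction on $i$, using Lemma~\ref{Ali'sLemma} as the combinatorial engine.

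\textbf{Notation and reduction.} As in Remark~\ref{Craigmethod}, $I=I_{3}(G)$ is minimally generated by the degree-three monomials $uvw$ of the length-two paths $u\!-\!v\!-\!w$ of $G$, and $L$ is minimally generated by squarefree degree-two monomials $uv$ --- the images, after deleting the auxiliary variable $x$, of the length-two paths of the ambient graph through $x$; since such a monomial has degree $2<3$, none of them lies in $I$. I first reduce to the one-step inclusion
\[
I^{i}L^{n-i}\cap I^{i+1}\ \subseteq\ I^{i+1}L^{n-i-1}\qquad(0\le i\le n-1).
\]
Granting it, if $m\in I^{k}\cap I^{i}L^{n-i}$ with $i<k\le n$ then, as $I^{k}\subseteq I^{i+1}$, we get $m\in I^{i}L^{n-i}\cap I^{i+1}\subseteq I^{i+1}L^{n-i-1}$, hence $m\in I^{k}\cap I^{i+1}L^{n-i-1}$; iterating $k-i$ times lands $m$ in $I^{k}L^{n-k}$, which is the $j=k$ term of $\sum_{j=k}^{n}I^{j}L^{n-j}$.

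\textbf{The exchange step.} Let $m$ be a monomial in $I^{i}L^{n-i}\cap I^{i+1}$, and fix path generators $q_{1},\dots,q_{i}$ of $I$ and generators $\ell_{1},\dots,\ell_{n-i}$ of $L$ with $q_{1}\cdots q_{i}\,\ell_{1}\cdots\ell_{n-i}\mid m$, together with path generators $r_{1},\dots,r_{i+1}$ of $I$ with $r_{1}\cdots r_{i+1}\mid m$. We must exhibit path generators $q'_{1},\dots,q'_{i+1}$ and $L$-generators $\ell'_{1},\dots,\ell'_{n-i-1}$ with $q'_{1}\cdots q'_{i+1}\,\ell'_{1}\cdots\ell'_{n-i-1}\mid m$; concretely, we must trade one of the $\ell_{j}$ for a length-two path, exploiting the extra multiplicity furnished by $m\in I^{i+1}$. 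Assume this is impossible and take $m$ minimal among such counterexamples. Then, comparing exponents variable by variable, the impossibility of every trade forces $r_{1}\cdots r_{i+1}$ (a product of $i+1$ length-two paths) to divide $q_{1}\cdots q_{i}$ (a product of $i$ length-two paths) times $\prod_{z\in D}z$, where $D$ is the multiset of vertices in $\text{supp}(\ell_{1}\cdots\ell_{n-i})$ not absorbed by the $r_{j}$'s. The degree inequality $3(i+1)\le 3i+|D|$ shows $|D|\ge 3$, and --- because every generator of $L$ has squarefree degree-two support coming from a length-two path through $x$ while $G$ has maximum degree $\le 2$ --- no length-two path of $G$ is supported in $D$. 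Hence $r_{1},\dots,r_{i+1}$, $q_{1},\dots,q_{i}$ and $D$ satisfy the hypotheses of Lemma~\ref{Ali'sLemma}.

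\textbf{Finishing, and the main obstacle.} Lemma~\ref{Ali'sLemma} then produces vertices $z_{1},z_{l}\in D$ joined in $G$ by a path $P$ of length $3t+1$ or $3t+2$ for some $t\ge 1$; one must now use the explicit shape of $L$ to see that $z_{1}$ and $z_{l}$ are also joined in the ambient graph by a path of length at most $2$ (an edge of $G$, or a two-edge path through $x$), so that $P$ closes up to a cycle of length in $\{3t+2,\,3t+3,\,3t+4\}$. Lengths $3t+3=3(t+1)$ are permitted, but $3t+2$ and $3t+4=3(t+1)+1$ are exactly the forbidden residues, so for $t\ge 2$ this contradicts the hypothesis that $G$ has no cycle of length $3i\pm 1$ with $i\ge 2$; the borderline case $t=1$, yielding putative cycle lengths $4$ or $5$, has to be handled by hand, by checking directly that in that configuration some trade can always be carried out, so that no minimal counterexample survives. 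I expect the crux --- and the main obstacle --- to be precisely the assertions I have only sketched: turning ``no trade is possible'' into the clean divisibility $r_{1}\cdots r_{i+1}\mid q_{1}\cdots q_{i}\prod_{z\in D}z$ with $D$ free of length-two paths, pinning down how the $\ell_{j}$'s link up so that the long path $P$ can be closed into a forbidden cycle, and clearing the small cases $t=1$; by comparison, the reduction to the one-step inclusion and the invocation of Lemma~\ref{Ali'sLemma} are routine.
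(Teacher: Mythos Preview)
Your reduction to the single exchange inclusion $I^{i}L^{n-i}\cap I^{i+1}\subseteq I^{i+1}L^{n-i-1}$ is clean, and if it holds it certainly yields the proposition. The paper does not take this route: it runs a double induction (forward on $n$, backward on $k$) directly on the sum $\sum_{j=k}^{n}I^{j}L^{n-j}$, and its cancellation steps (arranging $F=\emptyset$ and $c=0$ in the first expression) genuinely invoke the inductive hypothesis for the full sum, not a one-step trade.

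The substantive gap is in your cycle-closing. Your set $D$ consists of vertices in $\operatorname{supp}(\ell_{1}\cdots\ell_{n-i})$. In the paper the role of $D$ is played by a \emph{different} set: the $I^{k}$-expression is first rewritten as a product of paths $a_{j}b_{j}c_{j}\in I$ with $a_{j}b_{j}\in L$, and one takes $D=\{c_{1},\dots,c_{\ell}\}$, the \emph{third} vertices of those paths. Each such $c_{j}$ sits at distance exactly $3$ from $x$ in the ambient graph, so once Lemma~\ref{Ali'sLemma} produces a path of length $3t\pm 1$ between two of them, closing through $x$ adds $3+3=6$ and yields a cycle of length $3(t+2)\pm 1$, always in a forbidden residue class. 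By contrast, the vertices in your $D$ lie at distance $1$ or $2$ from $x$ (they are the endpoints of length-two paths through $x$), so closing through $x$ adds something in $\{2,3,4\}$, and the resulting cycle length can be a multiple of $3$. Your own list $\{3t+2,\,3t+3,\,3t+4\}$ already shows this: you note that $3t+3$ is permitted and yet claim a contradiction --- but nothing in your argument excludes the $3t+3$ outcome, so no contradiction follows.

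A related point: you assert that $z_{1}$ and $z_{l}$ are joined by a path of length at most $2$. This is not justified --- $z_{1}$ and $z_{l}$ need not lie in the same generator $\ell_{j}$; they may come from different generators of $L$ on opposite sides of $x$, in which case the short route through $x$ has length up to $4$. The paper's choice of $D$ avoids this entirely, precisely because its elements are uniformly at distance $3$ from $x$. Without that uniformity, the residue of the cycle length modulo $3$ is not controlled, and the argument breaks down.
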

\begin{proof}
We use induction on $n$ and a backward induction on $k$. First note that for $k=n$ the assertion is obvious then we can assume that $k<n$. 
We assume that there is a monomial $f\in I^k\cap I^{i}L^{n-i}$ such that $f\notin \sum_{j=k}^{n} I^{j}L^{n-j}$. Therefore, $f$ can be written 
in the following forms
\begin{equation}\label{equation(1)}
f=\displaystyle\left(\prod_{1\leq j\leq a}a_jb_jc_j\right)\left(\prod_{1\leq j\leq b}u_jv_jg_j\right)\left(\prod_{1\leq j\leq c}m_j\right)
\left(\prod_{y\in F}y\right)
\end{equation} 
where $a+b=k$, $m_j \in L$ and $a_jb_jc_j$ and $u_jv_jg_j$ belong to  $I$ for all $j$. Also we have 
 for each $1\leq j\leq a$ the product of exactly one pair of $a_j$, $b_j$ and $c_j$ is in $L$
and for each $1\leq j\leq b$ there are no pairs of $u_j, v_j$ and $g_j$ such that their product is in $L$. 
(Note that it's not possible to have three vertices such that products of two pairs of those are in $L$ because otherwise we have two paths which are incident $x$ like $abx$ and $acx$ and then we have a vertex of degree 3 or we have a square in $G$ which both are contradictions.) 
In this expression we may have some variables which products of no pairs of them is in $L$. These are listed in the set $F$. 

Since $f\in I^{i}L^{n-i}$ we also can have the following expression for $f$. 
\begin{equation}\label{equation(2)}
f=\displaystyle\left(\prod_{1\leq j\leq i}z_jn_jy_j\right)\left(\prod_{1\leq j\leq \ell}g'_je_j\right)
\left(\prod_{\omega\in W}w\right)
\end{equation} 
 where $z_jn_jy_j\in I$ and $g'_je_j\in L$ and $\ell\geq n-i$. Note that the $\omega$s are variables such that the products of no pairs is in 
$L$. 

If there is $z_jn_jy_j$ in (\ref{equation(2)}) such that it is in (\ref{equation(1)}) then we have 
$$\displaystyle \frac{f}{z_jn_jy_j}\in I^{i-1}L^{n-i}\cap I^{k-1}\subset \sum_{j=k-1}^{n-1} I^{j}L^{n-1-j}.$$
Note that the above inclusion can be concluded from the induction hypothesis. So we have $f\in \sum_{j=k}^{n} I^{j}L^{n-j}$ which is a contradiction. Therefore, we must not have a cubic monomial in common between (\ref{equation(1)}) and (\ref{equation(2)}) which are in $I$. 

We will show $F=\emptyset$. Suppose it is not true. Then for each $y\in F$ we can have two scenarios. It is possible to have $y\in W$ or we may have a monomial $z_jn_jy_j$ in (\ref{equation(2)}) which is divided by $y$. If $y\in W$, then we can cancel it from both sides of the equality. 
If we assume there is $z_jn_jy_j$ in (\ref{equation(2)}) which is divided by $y$, then we will rearrange $f$ in (\ref{equation(1)}) in a way that $z_jn_jy_j$ appears in the collection of cubic monomials in (\ref{equation(1)}). 

Also we can claim that $c=0$ in (\ref{equation(1)}). Suppose there is $m_j$ for $1\leq j \leq c$. Note that if $m_j$ divides 
$\prod_{1\leq j\leq \ell}g'_je_j$ then we can cancel $m_j$ from both sides of the equality and from the induction hypothesis for $n$ we 
have
$$\displaystyle \frac{f}{m_j}\in I^{k}\cap I^{i}L^{n-1-i}\subset \sum_{j=k}^{n-1} I^{j}L^{n-1-j}$$
(note that since $k< n$ we have $i<k\leq n-1$) and then we have $f\in \sum_{j=k}^{n} I^{j}L^{n-j}$ and it is a contradiction. 
If $m_j$ divides $\prod_{1\leq j\leq i}z_jn_jy_j$ then there is a variable $s$ such that $m_js=zns$ and $zn\in L$ and $zns\in I$. Since $s$
also belongs (\ref{equation(1)}) we can cancel $m_js$ from both sides of equality and then from the induction hypothesis on $n-1$ we have                                                                                                                               
$$\displaystyle \frac{f}{m_js}\in I^{k-1}\cap I^{i-1}L^{n-i}\subset \sum_{j=k-1}^{n-1} I^{j}L^{n-1-j}$$ 
and then $f\in \sum_{j=k}^{n} I^{j}L^{n-j}$ which is a contradiction. Then $c=0$.

Therefore, we have   $\displaystyle\left(\prod_{1\leq j\leq i}z_jn_jy_j\right)\left(\prod_{1\leq j\leq \ell}g'_je_j\right)$ divides 
$\displaystyle\left(\prod_{1\leq j\leq a}a_jb_jc_j\right)\left(\prod_{1\leq j\leq b}u_jv_jg_j\right)$ and then 
$\displaystyle\left(\prod_{1\leq j\leq i}z_jn_jy_j\right)$ divides 
$$\displaystyle c_1\dots c_{\ell}\left(\prod_{\ell+1\leq j\leq a}a_jb_jc_j\right)\left(\prod_{1\leq j\leq b}u_jv_jg_j\right).$$
Note that there is no edge between each pair of the $c_j$s. Because otherwise we must have a cycle of 
length 7 in $G$ which is a contradiction. Also we have 
$(a-\ell)+b=k-\ell\leq k-n+i<i$, then the number of monomials that we have in $\displaystyle\left(\prod_{1\leq j\leq i}z_jn_jy_j\right)$ is greater than the number of monomials in 
$\displaystyle\left(\prod_{\ell+1\leq j\leq a}a_jb_jc_j\right)\left(\prod_{1\leq j\leq b}u_jv_jg_j\right).$

By using Lemma~\ref{Ali'sLemma} we can conclude that there are $c_{i_1},c_{i_2}$ where $i_1,i_2\in\{1,2,\dots,\ell\}$ such that there is a path of two connecting $c_{i_1}$ and $c_{i_2}$. On the other hand, note that $c_{i_1},c_{i_2}$ are vertices belonging to the paths of form $m_1n_1c_{i_1}$ and $m_2n_2c_{i_2}$ where $m_1n_1$ and $m_2n_2$ are in $L$.  By the definition of $L$ we can conclude that there are paths $xm_1n_1$ and $xm_2n_2$ in $G$. Therefore, we can write there are paths of length  three between $x$ and $c_{i_1}$, and $x$ and $c_{i_2}$. Since we know there is no vertices of degree $\geq 3$ we have there is a cycle of length 8 in $G$. It is a contradiction and then we can conclude  that our claim is true.     
\end{proof}

\begin{col}\label{newCol}
Let $G$ be the cycle $C_{3k}$ for a positive integer $k$ and let $J=I_3(C_{3k})$ be the cubic path ideal on $C_{3k}$. Assume $k\leq 3$, then $J^{(n)}=J^{n}$ for all $n\geq 1$. 

\end{col}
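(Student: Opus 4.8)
The plan is to apply the Craig-type criterion of Remark~\ref{Craigmethod} to $J=I_3(C_{3k})$ with the variable $x=x_1$. By Observation~\ref{niceObservation} we have
$$(J:x_1)=(I_3(P),\,x_2x_3,\,x_2x_{3k},\,x_{3k-1}x_{3k}),$$
where $P$ is the path on $\{x_2,\dots,x_{3k}\}$ obtained from $C_{3k}$ by deleting $x_1$. In the notation of Remark~\ref{Craigmethod}, the ideal $I$ generated by the monomials of $J$ not involving $x_1$ is exactly $I_3(P)$, and $L=(x_2x_3,\,x_2x_{3k},\,x_{3k-1}x_{3k})$, since these three quadratic monomials are precisely the minimal generators of $J:x_1$ that do not lie in $I_3(P)$.

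Next I would check the two standing hypotheses of Remark~\ref{Craigmethod}. First, $P$ is a path, hence a rooted tree, so Corollary~\ref{Adam'thm} gives $I^{(n)}=I_3(P)^{(n)}=I_3(P)^{n}=I^{n}$ for all $n\geq 1$. Second, $(J:x_1)^{(n)}=(J:x_1)^{n}$ for all $n\geq 1$ is exactly Observation~\ref{niceObservation}(2), and this is the only place where the hypothesis $k\leq 3$ enters.

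It then remains to verify the containment
$$I^{k}\cap I^{i}L^{n-i}\subset\sum_{j=k}^{n}I^{j}L^{n-j}\qquad\text{for all }0\leq i<k\leq n,$$
and I would obtain this from Proposition~\ref{newProp} applied to $G=C_{3k}$ together with the pair $(I,L)$ described above: every vertex of the cycle $C_{3k}$ has degree $2$, and the only cycle of $G$ has length $3k\equiv 0\pmod 3$, so $G$ has no cycle of length $3i+1$ or $3i+2$ for any $i$ (in particular for $i\geq 2$). Feeding these three facts into Remark~\ref{Craigmethod} yields $J^{(n)}=J^{n}$ for all $n\geq 1$. The case $k=1$ is in any event immediate, since $I_3(C_3)=(x_1x_2x_3)$ is principal.

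I do not expect a genuine obstacle in the corollary itself: all of the real content has already been established in Observation~\ref{niceObservation} and, above all, Proposition~\ref{newProp} (which in turn rests on the combinatorial Lemma~\ref{Ali'sLemma}), so here one is only assembling these via Remark~\ref{Craigmethod}. The one point that deserves care is the bookkeeping of the first paragraph --- confirming that deleting $x_1$ from $C_{3k}$ leaves $I$ equal to the cubic path ideal of an honest path (so the rooted-tree input applies verbatim) and that $L$ is generated by exactly the three listed quadratics (so that Proposition~\ref{newProp} applies as stated).
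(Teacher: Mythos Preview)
Your proposal is correct and follows essentially the same route as the paper: pick a vertex $x$, identify $I$ as the cubic path ideal of the remaining path (so Corollary~\ref{Adam'thm} gives $I^{(n)}=I^{n}$), invoke Observation~\ref{niceObservation}(2) for $(J:x)^{(n)}=(J:x)^{n}$ when $k\le 3$, apply Proposition~\ref{newProp} for the required containment, and conclude via Remark~\ref{Craigmethod}. Your write-up is in fact a bit more explicit than the paper's about what $I$ and $L$ are and where the hypothesis $k\le 3$ is used.
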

\begin{proof}
We use Remark~\ref{Craigmethod}. We pick a variable $x$ in $C_{3k}$ and we define $I_{x}=(J:x)$. Let $I$ be an ideal of monomials in $J$ which do not involve $x$. Write $I_x=I+L$ where $L$ is an ideal in $I_x$ which are not in $I$. It is straightforward to see that $I$ is the cubic path ideal over the path graph over the vertex set $V(C_{3k})\backslash\{x\}$. Thus from Corollary~\ref{Adam'thm} we have 
\begin{align}
I^{(n)}=I^{n}\hspace{.3 in}\text{for $n\geq 1$.}
\end{align}
Also it is clear that $L$ is the edge ideal of a subtree of $G$. From Observation~\ref{niceObservation} we have  $I_{x}^{(n)}=I_{x}^{n}$ for all $n\geq 1$. Therefore, Remark~\ref{Craigmethod} and Proposition~\ref{newProp} settle the claim.

\end{proof}

\begin{thm}\label{maintheorem}
Let $G$ be a connected graph and let $J=I_3(G)$ be the cubic path ideal of $G$. Then $J^{(n)}=J^{n}$ for all $n\geq 1$ if and only if $G$ is a path graph $P_{k}$ or $G$ is the cycle $C_{3k}$ when $k=1,2,3$.   
\end{thm}
\begin{proof}
If $G$ is the path graph $P_k$ or the cycle $C_{3k}$ for $k=1,2,3$ from Corollary~\ref{newCol} and Corollary~\ref{Adam'thm} we can write $J^{(n)}=J^{n}$ for all $n\geq 1$. So we assume for all $n$ we have $J^{(n)}=J^{n}$. Then from Theorem~\ref{theorem:meandArindam} part (1) we can conclude that $G$ has no vertex of the length 3. Then we can conclude that $G$ must be a path graph or a cycle. On the other hand from Theorem~\ref{theorem:meandArindam} and Observation~\ref{niceObservation} we can say $G$ is a path graph of $C_{3k}$ for $k=1,2,3$. 

\end{proof}
\section{Applications}

 Our results on path ideals in Theorem~\ref{maintheorem} give the following applications in the linear programming.  
\begin{app}\label{application1} 
Let $\mathbf{M}$ be a square matrix of order $p$ defined as follow and ${\bf a}\in\mathbb{N}^{p}$.  
 \[\mathbf{M}=\begin{bmatrix}
    1 & 0 & 0 & 0 &\dots  &  0 & 0 & 1 & 1\\
    1 & 1 & 0 & 0 & \dots  & 0 & 0  & 0 & 1\\
		1 & 1 & 1 & 0 & \dots  & 0 & 0 & 0  & 0\\
		0 & 1 & 1 & 1 &\dots  & 0 & 0 & 0 & 0\\
\vdots & \vdots & \vdots & \vdots & \ddots & \vdots&\vdots&\vdots&\vdots \\
		0 &0 & 0 & 0  & \dots  & 1& 1 & 0 & 0\\
    0 &0 & 0 & 0  & \dots  & 1& 1 & 1 & 0\\
		0 & 0 & 0 & 0 & \dots  & 0 &1 & 1 & 1
\end{bmatrix}
\]
 Consider the following linear programming problems:\\
\begin{enumerate}
\item $\begin{array}{ll@{}ll}
\text{maximize}  &  \mathbbm{1}^{p}\cdot \mathbf{y}, &\\
\text{subject to}& \mathbf{M}\cdot\mathbf{y}\leq {\mathbf a}, {\mathbf y}\in   \mathbb{N}^{p} &\\
       \end{array}$
 
\item 
$\begin{array}{ll@{}ll}\label{linearProg}
\text{minimize}  & \mathbf{a}\cdot \mathbf{z},  &\\
\text{subject to}& \mathbf{M}^{\top}\cdot\mathbf{z}\geq \mathbbm{1}^{p}, \mathbf{z}\in   \mathbb{N}^{p} &\\
       \end{array}$

\end{enumerate}
Now we denote the optimal values for theses linear programming problems by $\nu_{\mathbf{a}}(\mathbf{M})$ and 
$\tau_{\mathbf{a}}(\mathbf{M})$ respectively. Then we have $\nu_{\mathbf{a}}(\mathbf{M})=\tau_{\mathbf{a}}(\mathbf{M})$ if and 
only if $p=3k$ for $k=1,2,3$.
\end{app} 
 \begin{app}
 Let $\mathbf{M}$ be a square matrix of order $p\times(p+1)$ defined as follow and ${\bf a}\in\mathbb{N}^{p}$.  
 \[\mathbf{M}=\begin{bmatrix}
    1 & 0 & 0  &\dots  &  0 & 0\\
    1 & 1 & 0  & \dots  & 0 & 0\\
		1 & 1 & 1  & \dots  & 0 & 0 \\
		0 & 1 & 1  &\dots  & 0 & 0 \\
\vdots & \vdots & \vdots & \ddots &\vdots&\vdots \\
		0 &0 & 0   & \dots  & 1& 1  \\
    0 &0 & 0   & \dots  & 1& 1  \\
		0 & 0 & 0 & \dots  & 0 &1  
\end{bmatrix}
\]
 Now if we consider the linear programming problem (\ref{linearProg}) and we define $\nu_{\mathbf{a}}(\mathbf{M})$ and $\tau_{\mathbf{a}}(\mathbf{M})$ as the optimal values for theses linear programming problems then we have $\nu_{\mathbf{a}}(\mathbf{M})=\tau_{\mathbf{a}}(\mathbf{M})$ for all $p$. 
 \end{app}

\section*{Acknowledgment}

We are very thankful to Professor R. Villarreal and Professor D. Ullman for their helpful comments and suggestions.  We gratefully acknowledge the helpful computer algebra system Macaulay2~\cite{M2} , without which our work would have been difficult or impossible. 

\bibliographystyle{plain}
\bibliography{mine1}

\end{document}